\documentclass[11pt]{amsart}

\usepackage{amsmath,amssymb}
\usepackage[a4paper,margin=3.1cm]{geometry}
\usepackage{booktabs}
\usepackage{url}

\newtheorem{theorem}{Theorem}[section]
\newtheorem{proposition}[theorem]{Proposition}
\newtheorem{lemma}[theorem]{Lemma}

\theoremstyle{definition}
\newtheorem{problem}[theorem]{Problem}
\theoremstyle{remark}
\newtheorem{remark}[theorem]{Remark}

\newcommand{\R}{\mathbb{R}}
\newcommand{\Z}{\mathbb{Z}}
\newcommand{\idim}{\operatorname{idim}}
\newcommand{\oc}{\operatorname{oc}}

\begin{document}

\title{A 32-leaf tree requiring six coordinates for an isometric
$\ell_\infty$ embedding}

\author{Logan R.\ Chalmers}
\address{University of Otago, Dunedin 9016, New Zealand}
\email{logan.chalmers@postgrad.otago.ac.nz}

\subjclass[2020]{Primary 05C12; Secondary 05C05, 54E35}
\keywords{Strong isometric dimension, tree, isometric embedding, maximum
metric, orientation cover}

\begin{abstract}
We disprove the conjecture that every tree with $t$ leaves embeds
isometrically into $\ell_\infty^{\lceil\log_2t\rceil}$.  We construct a
$32$-leaf tree whose least isometric $\ell_\infty$-dimension is six rather
than five, and prove that every tree with at most
$31$ leaves attains the conjectured bound; Brigham et al. had recorded
equality through $21$ leaves. Thus $32$ is the first failure, and the example
answers affirmatively a question of Fitzpatrick and Nowakowski from 2000. The same
topology has dimension six under every assignment of positive edge lengths,
and therefore also disproves the later sharp leaf-threshold conjecture for
weighted metric trees.
\end{abstract}

\maketitle

\section{Introduction}\label{sec:intro}

For a finite tree $T$, let $\dim_\infty(T)$ be the least $m$ for which its
path metric embeds isometrically into $(\R^m,\|\cdot\|_\infty)$.  If $T$ has
$t$ leaves (degree-one vertices), then
\begin{equation}\label{eq:leaf-lower}
 \dim_\infty(T)\ge\lceil\log_2t\rceil.
\end{equation}
The orientation characterization below makes the bound immediate.  In each
orientation, record whether a leaf edge points towards or away from its leaf.
Two leaves with the same $m$-bit record cannot be joined by a directed path,
so the $t$ records must be distinct.  For $t\ge2$, set
\[
 D(t)=\max\{\dim_\infty(T):T\text{ is a finite tree with }t\text{ leaves}\}.
\]

In 1995 Linial, London and Rabinovich proved that $D(t)=O(\log t)$ and that
logarithmic growth is necessary
\cite[Theorem~5.3]{LinialLondonRabinovich1995}.  In 2000 Fitzpatrick and
Nowakowski formulated the invariant as the strong isometric dimension of a
tree, proved
$\lceil\log_2t\rceil\le D(t)\le2\lceil\log_2t\rceil$, and asked whether the
lower bound can be strict \cite[Problem~43]{FitzpatrickNowakowski2000}.
Brigham, Chartrand, Dutton and Zhang recast the characterization in terms of
leaf pairs in 2005 and improved the upper constant to less than $1.4405$
\cite[Lemma~3 and Theorem~6]{BrighamChartrandDuttonZhang2005}.  They
conjectured that the sharp constant is one \cite[p.~280]{BrighamChartrandDuttonZhang2005}
and recorded that every tree with at most $21$ leaves attains the lower bound
\cite[p.~281]{BrighamChartrandDuttonZhang2005}.  Aksoy, K{\i}l{\i}\c{c} and Ko\c{c}ak
formulated a weighted version in 2020: every positively weighted finite
metric tree with at most $2^n$ leaves should embed isometrically into
$\ell_\infty^n$ \cite{AksoyKilicKocak2020}.
Queiroz and Januario subsequently gave an efficient exact embedding algorithm
for weighted trees in logarithmic dimension \cite{QueirozJanuario2022}.

Applied to unit edge lengths, the conjecture of Aksoy, K{\i}l{\i}\c{c} and
Ko\c{c}ak would give
$D(t)\le\lceil\log_2t\rceil$.  Together with \eqref{eq:leaf-lower}, it would
imply
\begin{equation}\label{eq:conjectured}
 D(t)=\lceil\log_2t\rceil\qquad(t\ge2).
\end{equation}

\begin{theorem}\label{thm:counterexample}
There is a $32$-leaf tree, all of whose vertices have degree one or three,
whose metric realization has least isometric $\ell_\infty$-dimension six under every
assignment of positive edge lengths.
\end{theorem}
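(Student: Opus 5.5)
We will use the orientation characterization mentioned in the introduction, in its weighted form. A weighted tree $T$ embeds isometrically in $\ell_\infty^m$ if and only if there are $m$ orientations of $E(T)$ such that every pair of leaves $u,v$ is joined by a path that is directed (from $u$ to $v$ or from $v$ to $u$) in at least one orientation. In an $\ell_\infty$ embedding of a tree metric, the coordinate realizing $d(u,v)$ is monotone along the $u$--$v$ path and has slope $\pm1$ on each edge. Conversely, given such orientations, one takes coordinate $i$ to be the signed potential along orientation $i$; this is well defined on a tree and is $1$-Lipschitz. This condition does not mention edge lengths. Hence $\dim_\infty$ depends only on the topology whenever all lengths are positive, and the ``every assignment of positive edge lengths'' clause reduces to the unweighted statement. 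The upper bound $6$ follows from the known bound $D(t)\le\lceil\log_2 t\rceil+1$ for small $t$, or more directly from an explicit six-orientation cover that we write down. The real content is that no five orientations cover all $\binom{32}{2}$ leaf pairs.

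For the lower bound, record for each orientation $i\le5$ and each leaf $\ell$ the bit $b_i(\ell)$: $1$ if the leaf edge points into $\ell$, $0$ otherwise. Leaves $u,v$ can only be joined in orientation $i$ if $b_i(u)\ne b_i(v)$. As in the introduction, the $32$ leaf records are distinct, so they form a bijection with $\{0,1\}^5$. This tightness is the lever. Every pair of records is at Hamming distance at least $1$, and each coordinate splits the leaves into two halves of exactly $16$. In orientation $i$, a pair separated by bit $i$ is covered iff the internal path is directed from the out-leaf side to the in-leaf side. So orientation $i$ must act as a ``flow'' from the $16$ sources to the $16$ sinks. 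The pairs differing in only one bit $i$ (a perfect matching of $16$ pairs per coordinate) must be covered by orientation $i$ alone.

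The tree will be chosen cubic with $32$ leaves, $30$ internal vertices, and a deliberately asymmetric, caterpillar-plus-cherries shape. The shape is designed so that these forced single-bit constraints interact badly. Consider a cherry $\{a,b\}$ with common neighbour $x$. It forces $a,b$ to differ in some bit $i$. Then $x$ is an internal vertex on paths in orientation $i$, where one side is a source and the other a sink.

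The key step is a counting/parity argument on internal edges. In orientation $i$, an internal edge $e$ splitting the leaves into sides $A,B$ can be traversed in only one direction. Therefore either all source--sink pairs straddling $e$ with source in $A$ are covered, or all those with source in $B$, but not both. For each edge $e$ and coordinate $i$, this loses at least $\min(|S_i\cap A|\,|T_i\cap B|,\ |S_i\cap B|\,|T_i\cap A|)$ pairs. These losses must be recovered by other coordinates.

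I plan to combine this with the forced Hamming-distance-one matchings to derive a contradiction. The intended route is to exhibit a set of edges (the spine of the caterpillar between groups of cherries) where the hypercube labelling forces both directions to be needed in the same coordinate for some $1$-bit pair. The labelling is rigid enough to make a finite case analysis over the placement of subcubes on branches possible. By symmetry of $\{0,1\}^5$ under coordinate permutations and bit flips, we may normalize the labels of a few leaves. The remaining cases can then be checked, possibly with computer assistance, by the orientation-cover search used later in the paper.

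I expect the main obstacle to be precisely this exhaustion: proving that no assignment of the $32$ hypercube labels to the leaves admits consistent orientations. A clean structural lemma would be needed, for example that the leaves in any subtree must carry labels forming a subcube-like set. Failing that, the fallback is a certified exhaustive search over normalized labellings.
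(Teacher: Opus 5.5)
Your reduction to orientation covers matches Proposition~\ref{prop:orientation}, and your remarks about leaf records are correct. The proposal still does not prove the theorem: it never fixes a tree, and the lower bound $\oc(T)\ge6$, which is the entire content, is left as a plan. The statement is existential, so a specific topology has to be produced, and a shape heuristic cannot stand in for it. The design you describe in fact points the wrong way. A cubic caterpillar with $32$ leaves has a five-orientation cover:
\begin{itemize}
\item number the leaves $0,\dots,31$ along the spine;
\item in all five orientations, direct the spine from the end at leaves $0,1$ towards the end at leaves $30,31$;
\item in orientation $i$, point the edge into leaf $k$ exactly when bit $i$ of $k$ is $1$.
\end{itemize}
For $j<k$, the highest bit in which $j$ and $k$ differ then directs the $j$--$k$ path. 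Combined with the fact that every tree with at most $31$ leaves attains the leaf bound, this shows that counterexamples are special. The paper uses the particular tree of Table~\ref{tab:parents}, whose root branches have $4$, $13$ and $15$ leaves, and certifies $\oc(T)\le6$ by the explicit cover of Table~\ref{tab:orientations}. Your alternative source for the upper bound is not available: $D(t)\le\lceil\log_2t\rceil+1$ is open in general (Problem~\ref{prob:plusone}), and at $t=32$ it follows only from the census value $D(31)=5$.

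For the lower bound, the edge-loss inequality needs rephrasing. In orientation $i$, an internal edge lets at most one of the two straddling source--sink families be covered; it does not force either family to be covered. Even so, nothing in the proposal shows that these counts and the forced single-bit matchings yield a contradiction. The hoped-for ``subcube'' lemma is unproved and cannot hold literally for branches with $13$ or $15$ leaves. The fallback is also infeasible. Up to hyperoctahedral symmetry there are about $32!/(2^5\cdot5!)\approx7\times10^{31}$ leaf labellings, and a labelling does not even determine the $29$ internal edges in each orientation.

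The missing idea is a decomposition that makes the exhaustion small and provably complete. The paper roots $T$ at vertex $0$ and gives each rooted branch the antichain of minimal masks $M(x)\subseteq\{1,\dots,5\}$, taken up to coordinate permutation. It combines child states via \eqref{eq:cross} and \eqref{eq:prop} and discards dominated states. Lemma~\ref{lem:recurrence} proves that any five-orientation cover survives this pruning and appears in the root compatibility check. The three root branches yield only $1$, $6$ and $9$ states, and no combination passes.
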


\begin{theorem}\label{thm:transition}
For $2\le t\le31$,
\[
 D(t)=\lceil\log_2t\rceil,
\]
whereas $D(32)=6$.
\end{theorem}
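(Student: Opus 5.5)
The plan has three parts: the upper bound $D(32)\le 6$, the upper bound $D(t)\le\lceil\log_2t\rceil$ for $t\le31$, and the lower bound $D(32)\ge6$. The lower bounds for $t\le 31$ are just \eqref{eq:leaf-lower}.

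For $D(32)\le6$, I will use monotonicity in the number of leaves. Any tree with $32$ leaves can be obtained from a tree with $31$ leaves by attaching a pendant path at one vertex. The orientation characterization handles this step: an isometric embedding in $\ell_\infty^m$ corresponds to $m$ orientations such that every ordered pair of vertices is joined by a directed path in some orientation. Adding one extra coordinate, oriented away from the new leaf everywhere except towards it along the new pendant path, supplies all the pairs involving the new leaf; pairs inside the new path are handled by its own orientations. A cleaner alternative is the Linial--London--Rabinovich/Fitzpatrick--Nowakowski style recursion $D(2t)\le D(t)+1$ obtained by splitting at a centroid, or more simply appending one coordinate to a $5$-dimensional embedding of a $31$-leaf subtree. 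Combining this with the $t\le31$ result gives $D(32)\le 6$. The lower bound $D(32)\ge6$ is exactly Theorem~\ref{thm:counterexample} specialized to unit edge lengths, since the tree constructed there has $32$ leaves.

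The main work is therefore $D(t)\le\lceil\log_2t\rceil$ for $22\le t\le31$; the range $t\le21$ is recorded by Brigham et al., though I would recheck it uniformly. Since the dimension only grows when leaves are added, it suffices to treat $t=31$, which needs $m=5$, and the trees with $17$ to $21$ leaves are already covered. I would first reduce to trees in which every internal vertex has degree three and every edge has unit length. Degree-two vertices can be suppressed, because paths subdivide without changing the leaf-pair condition. A vertex of high degree can be split into a cubic configuration, whose embedding restricts isometrically after contracting an edge, since contraction preserves the orientation-cover property. Both reductions should be justified through the leaf-pair characterization: a family of orientations works if and only if, for every ordered pair of leaves $(u,v)$, some orientation directs the $u$--$v$ path from $u$ to $v$.

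I would then encode each orientation as a choice of a "sink-side" bit per edge consistent along paths. Equivalently, each coordinate is a function assigning to each leaf a bit, with constraints propagated through the cubic tree. This turns the question into whether $31$ (or fewer) leaves admit a labeling by $5$-bit codes satisfying the local consistency conditions.

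The hardest step is covering every cubic tree with $31$ leaves, of which there are very many. I would attack it with a structural recursion. Pick a centroid edge splitting the leaves into parts of sizes $a$ and $b$ with $a,b\le 16$ after rebalancing. Embed each side in dimension $4$ with codes avoiding a prescribed "root" pattern, using a strengthened induction hypothesis (an embedding in which a designated boundary leaf receives a chosen code). Then use the fifth coordinate to separate the two sides. Where the counting is too tight, which is near $a=16$ and precisely where the $32$-leaf obstruction lives, I expect to need a computer-assisted exhaustive search over cubic trees with those imbalanced splits, organized by the rooted subtree types on each side. I would present this as a finite case check, verified by a SAT encoding of the leaf-pair condition.
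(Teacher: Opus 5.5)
Your skeleton matches the paper's: reduce to cubic trees, take the lower bounds from \eqref{eq:leaf-lower} and Theorem~\ref{thm:counterexample}, get $D(32)\le6$ by deleting a leaf $x$ and adding one orientation, and settle $31$ leaves by machine. Two of your choices are sound and genuinely different. Using $D(t)\le D(t+1)$ means that, beyond $t=2,4,8,16$, only $t=31$ needs checking, which is a legitimate economy over the paper's census of all $4\le t\le29$. Your splitting/contraction reduction to cubic trees is a valid self-contained replacement for the citation of Fitzpatrick--Nowakowski.

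There is, however, a genuine gap at the foundation: the characterization you state is wrong. A coordinate only needs $|f_i(x)-f_i(y)|=d(x,y)$, so the condition is on \emph{unordered} leaf pairs: some orientation directs the path from $x$ to $y$ \emph{or} from $y$ to $x$. Your ordered version is strictly stronger. It forces the sets of orientations in which each leaf edge points away from its leaf to form an antichain in $\{0,1\}^m$. So five orientations could serve at most $\binom{5}{2}=10$ leaves, and the SAT check you propose would reject every $31$-leaf tree. For the same reason, your extra coordinate in the $D(32)$ step would supply only one direction. With the correct condition that step works, but the sixth orientation must direct \emph{every} edge away from $x$. Orienting the pendant path towards $x$, as you wrote, leaves no path from $x$ to a leaf outside it directed.

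Your ``cleaner alternative'' $D(2t)\le D(t)+1$ is false, since it would give $D(32)\le D(16)+1=5$. Splitting only works across an edge. If an edge $e$ separates the leaves into $a$ and $b$, one orientation (everything on one side directed toward $e$, on the other away from it) covers all cross pairs. Hence $\oc(T)\le\max(D(a),D(b))+1$, with no strengthened hypothesis needed. For $31$ leaves this requires a $15\,|\,16$ edge, which need not exist: take a centroid vertex whose branches carry $11,10,10$ leaves. Likewise the counterexample, with branches $4,13,15$, has no $16\,|\,16$ edge. Such trees are the whole difficulty, and ``exhaustive search over imbalanced splits'' does not yet say how every topology is reached and decided. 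The paper settles this in three steps. A cubic $31$-leaf tree has a unique leaf centroid, since all three branches have at most $15$ leaves and a centroid edge would split $31$ evenly. It therefore enumerates all $75{,}021{,}750$ topologies once each as unordered triples of rooted branches. It then decides each triple exactly with the dominance-pruned branch-state recurrence.
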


Theorem~\ref{thm:counterexample} answers Problem~43, refutes
\eqref{eq:conjectured}, and disproves the weighted-tree
conjecture.  Theorem~\ref{thm:transition}
shows that $32$ is the first number of leaves for which
\eqref{eq:leaf-lower} is strict; here $\log_2 32=5$.

Proposition~\ref{prop:orientation} identifies the embedding dimension with a
topological orientation-cover number. Six explicit orientations give the
upper bound for the example. A recurrence on rooted branches rules out five
orientations and verifies the smaller cubic topologies.

\section{Orientation covers}\label{sec:orientation}

Let $T=(V,E)$ be a finite tree. An \emph{orientation cover} of size $m$ is a
family $O_1,\dots,O_m$ of orientations of $E$ such that for every pair of
leaves $x,y$ some $O_i$ contains a directed path from $x$ to $y$ or from $y$
to $x$. Write $\oc(T)$ for the least size of an orientation cover. For
unweighted trees this characterization is due to Fitzpatrick and Nowakowski
\cite[Corollary~26]{FitzpatrickNowakowski2000} (with all vertex pairs) and, in
the leaf-pair form used here, to Brigham, Chartrand, Dutton and Zhang
\cite[Lemma~3]{BrighamChartrandDuttonZhang2005}. We include the weighted
version to address the conjecture for positively weighted metric trees.

\begin{proposition}\label{prop:orientation}
Let $T$ be a finite tree with at least one edge and positive edge lengths $w$,
and let $|T|_w$
denote its metric realization. The following three quantities are equal:
\begin{enumerate}
\item[(i)] the least $m$ such that $|T|_w$ embeds isometrically into
$(\R^m,d_\infty)$;
\item[(ii)] the least $m$ such that the vertex set of $T$, with the
path-length metric induced by $w$, embeds isometrically into
$(\R^m,d_\infty)$;
\item[(iii)] $\oc(T)$.
\end{enumerate}
In particular the least embedding dimension depends only on the topology of
$T$, not on $w$.
\end{proposition}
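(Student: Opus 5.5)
I will prove the cycle of inequalities (i) $\ge$ (ii) $\ge$ (iii) $\ge$ (i). The first is trivial, since an isometric embedding of $|T|_w$ restricts to one of the vertex set.

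\emph{Step (ii) $\ge$ (iii).} Let $f=(f_1,\dots,f_m)$ be an isometric embedding of $(V,d_w)$ into $\ell_\infty^m$. For each edge $uv$ we have $|f_i(u)-f_i(v)|\le w(uv)$ for all $i$, since every coordinate is $1$-Lipschitz. Orient $uv$ in $O_i$ from $u$ to $v$ if $f_i(v)\ge f_i(u)$; ties are broken arbitrarily. Now take leaves $x,y$ joined by the path $x=v_0,\dots,v_k=y$. Some coordinate $i$ attains $|f_i(y)-f_i(x)|=d(x,y)=\sum_j w(v_{j-1}v_j)$. Bound this by the triangle inequality, $|f_i(y)-f_i(x)|\le\sum_j|f_i(v_j)-f_i(v_{j-1})|\le\sum_j w(v_{j-1}v_j)$. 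Equality then forces each increment to equal $\pm w$ with a common sign. Because $w>0$, every increment is nonzero, so there is no tie and the path is directed in $O_i$. This gives an orientation cover of size $m$.

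\emph{Step (iii) $\ge$ (i).} Given an orientation cover $O_1,\dots,O_m$, fix a base vertex $r$. Define $g_i\colon|T|_w\to\R$ by $g_i(r)=0$, and let $g_i$ increase at unit speed along each edge in its $O_i$ direction. This is well defined because $T$ is a tree: $g_i(p)$ is the signed length of the path from $r$ to $p$, with each edge counted by its $O_i$ sign. Each $g_i$ is $1$-Lipschitz, so $\|g(p)-g(q)\|_\infty\le d(p,q)$.

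For the reverse inequality, take points $p,q$ in $|T|_w$ and extend the geodesic $[p,q]$ at both ends to a leaf-to-leaf path. First extend inside the edges containing $p$ and $q$. Then keep going: from any non-leaf vertex we can continue away from the path, because $T$ is a tree and the walk never returns. This gives leaves $x,y$ whose geodesic contains $[p,q]$. When $p=q$ there is nothing to prove. If $T$ is a single edge, its two endpoints are the leaves.

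By the cover property, some $O_i$ directs the path from $x$ to $y$, or from $y$ to $x$. Then $g_i$ is strictly monotone at unit speed along that path, so $|g_i(p)-g_i(q)|=d(p,q)$. Hence $g$ is an isometric embedding into $\ell_\infty^m$. Since (iii) is purely combinatorial, the last sentence of the proposition follows.

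The main obstacle is the extension-to-leaves argument for arbitrary points, including interior points of edges, together with the strictness/tie issue in (ii) $\ge$ (iii). Positivity of $w$ handles the latter, and the tree structure handles the former.
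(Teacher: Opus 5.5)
Your proof is correct and follows the paper's route: restriction for (i)$\Rightarrow$(ii), equality in the triangle-inequality chain along a leaf-to-leaf path for (ii)$\Rightarrow$(iii), and integrating signed edge lengths plus extending $[p,q]$ to a leaf-to-leaf geodesic for (iii)$\Rightarrow$(i). The only cosmetic difference is that you orient every edge by the sign of its coordinate increment, using $w>0$ to exclude ties on the extremal path, whereas the paper orients only the tight edges and completes the remaining ones arbitrarily.
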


\begin{proof}
(iii)$\Rightarrow$(i). Given an orientation cover $O_1,\dots,O_m$, fix a root
$r$ and define $f_i\colon|T|_w\to\R$ by $f_i(r)=0$ and by integrating signed
lengths along paths from $r$: crossing an edge with $O_i$ adds its length,
against $O_i$ subtracts it; extend affinely along edges. Since its slope has
absolute value one on every edge, each $f_i$ is $1$-Lipschitz. If $p\ne q$,
extend the $p$--$q$ segment to a leaf-to-leaf
geodesic with endpoints $x,y$. If $O_i$ directs the $x$--$y$ path, then
$|f_i(p)-f_i(q)|=d(p,q)$. Hence $f=(f_1,\dots,f_m)$ is isometric.

(i)$\Rightarrow$(ii) follows by restriction.

(ii)$\Rightarrow$(iii). Let $f$ be an isometric embedding of the vertex
metric into $(\R^m,d_\infty)$. For each coordinate $i$, partially orient $E$:
direct $uv$ from $u$ to $v$ if $f_i(v)-f_i(u)=w(uv)$, from $v$ to $u$ if
$f_i(v)-f_i(u)=-w(uv)$, and not at all otherwise. Let $x,y$ be leaves and let
$i$ attain $d_\infty(f(x),f(y))=d(x,y)$. Along the path
$x=v_0,v_1,\dots,v_k=y$,
\[
 \sum_{j=1}^k w(v_{j-1}v_j)=|f_i(y)-f_i(x)|
 \le\sum_{j=1}^k|f_i(v_j)-f_i(v_{j-1})|
 \le\sum_{j=1}^k w(v_{j-1}v_j),
\]
so equality holds throughout: every increment has full magnitude
$w(v_{j-1}v_j)$ and all have the same sign. The path is therefore directed in
coordinate $i$. Orient every remaining unoriented edge arbitrarily in each
coordinate. The resulting $m$ orientations form an orientation cover.
\end{proof}

\begin{remark}\label{rem:lattice}
For unit lengths the coordinates constructed in (iii)$\Rightarrow$(i) are
integers, so $\oc(T)$ is also the least $k$ with an isometric embedding into
the Chebyshev lattice $\Z^k$, the dimension studied in
\cite{BrighamChartrandDuttonZhang2005}; thus $\oc(T)=\idim(T)$ in the sense
of \cite{FitzpatrickNowakowski2000}. Since
(ii)$\Rightarrow$(iii) uses only vertex values, Theorem~\ref{thm:counterexample}
also applies to the vertex-set reading of the weighted-tree conjecture.
\end{remark}

\section{The counterexample}\label{sec:tree}

Let $T$ have vertex set $\{0,1,\dots,61\}$, with each vertex $v\ne0$
joined to the parent $p(v)$ in Table~\ref{tab:parents}.  This specifies a
cubic tree with $61$ edges: direct inspection gives $32$ vertices of degree
one and $30$ vertices of degree three.  The
three branches at vertex $0$ contain $4$, $13$ and $15$ leaves.

\begin{table}[ht]
\centering
\caption{The parent map of $T$: within each row, $p(v)$ for consecutive $v$.}
\label{tab:parents}
\begin{tabular}{c@{\qquad}l}
\toprule
$v$ & $p(v)$ \\
\midrule
$1$--$10$  & $0,\,1,\,2,\,2,\,1,\,5,\,5,\,0,\,8,\,9$ \\
$11$--$20$ & $9,\,8,\,12,\,13,\,13,\,15,\,15,\,12,\,18,\,19$ \\
$21$--$30$ & $19,\,18,\,22,\,23,\,23,\,22,\,26,\,27,\,27,\,26$ \\
$31$--$40$ & $30,\,30,\,0,\,33,\,34,\,34,\,33,\,37,\,38,\,38$ \\
$41$--$50$ & $40,\,40,\,42,\,42,\,44,\,44,\,37,\,47,\,48,\,48$ \\
$51$--$60$ & $50,\,50,\,52,\,52,\,47,\,55,\,56,\,56,\,55,\,59$ \\
$61$       & $59$ \\
\bottomrule
\end{tabular}
\end{table}

Order the edges lexicographically as $e_0,\dots,e_{60}$, writing
$e_i=(u,v)$ with $u<v$. In each hexadecimal mask below, bit $i$ is one when
$e_i$ is directed from $u$ to $v$ and zero otherwise; $e_0$ is the least
significant bit.

\begin{table}[ht]
\centering
\small
\caption{Six orientations covering every pair of leaves of $T$.}
\label{tab:orientations}
\begin{tabular}{c@{\quad}c@{\qquad}c@{\quad}c}
\toprule
$1$ & \texttt{0420081a6bb5f4bb} & $4$ & \texttt{1f810f75bf905b94} \\
$2$ & \texttt{1fff8041f9177ffb} & $5$ & \texttt{11020881ffffe402} \\
$3$ & \texttt{00007db3dff85ffb} & $6$ & \texttt{0689401abffbfe02} \\
\bottomrule
\end{tabular}
\end{table}

For every pair of leaves, its unique path is directed in at least one of the
six orientations in Table~\ref{tab:orientations}. Thus $\oc(T)\le6$.

\label{sec:lower}
To prove $\oc(T)\ge6$, root $T$ at vertex $0$ and consider five orientations
of its edges. For a rooted branch $B$ with attachment edge $e$ and a leaf
$x$ of $B$, let $M(x)\subseteq\{1,\dots,5\}$ consist of the orientations in
which the path from $e$ to $x$, including $e$, is directed consistently. An
empty $M(x)$ precludes covering $x$ with any leaf outside $B$. Since all
external constraints are monotone in the masks, the \emph{state} of $B$ is
the antichain of inclusion-minimal masks, taken up to coordinate permutation.

Suppose an internal branch has child states $A,B$. Let $D$ be the coordinates
in which the child attachment edges have opposite signs, and let $P$ be those
in which the first child edge agrees with the parent attachment edge. The
recurrence ranges over relative coordinate permutations and all $D,P$ such
that
\begin{equation}\label{eq:cross}
 a\cap b\cap D\neq\varnothing\qquad(a\in A,\ b\in B),
\end{equation}
and propagates the masks
\begin{equation}\label{eq:prop}
 \{a\cap P: a\in A\}\;\cup\;\{b\cap(P\mathbin{\triangle}D): b\in B\};
\end{equation}
outcomes with an empty mask are rejected, and the rest are reduced to minimal
masks and identified under coordinate permutations. A state $S$ \emph{dominates}
$R$ if, after relabelling coordinates, every $s\in S$ contains some $r\in R$.
Any continuation that covers every mask of $R$ therefore covers every mask of
$S$, since all subsequent tests are nonempty intersections; the intersections
in \eqref{eq:prop} preserve the same containments. Thus every completion of
$R$ is also a completion of $S$, and $R$ may be discarded once $S$ is retained.

\begin{lemma}\label{lem:recurrence}
Starting from $\{11111\}$, the recurrence with these reductions is complete:
if five orientations cover all leaf pairs, then there exist retained states
$A,B,C$ of the three root branches and sign-difference sets $D_B,D_C$ such that
\[
 a\cap b\cap D_B\neq\varnothing,\qquad
 a\cap c\cap D_C\neq\varnothing,\qquad
 b\cap c\cap(D_B\mathbin{\triangle}D_C)\neq\varnothing
\]
for all $a\in A$, $b\in B$, $c\in C$.
\end{lemma}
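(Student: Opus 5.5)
Given five orientations $O_1,\dots,O_5$ covering all leaf pairs of $T$, the plan is to read off actual states of every rooted branch and check that the recurrence produces them, or states that dominate them. The argument is an induction on the height of the rooted branch. For each edge $e$, record the sign vector $\sigma_e\in\{\pm1\}^5$, whose $i$th entry says whether $O_i$ directs $e$ away from or towards the root. For a branch $B$ with attachment edge $e$, the true mask family $\mathcal M_B=\{M(x):x \text{ a leaf of } B\}$ consists of nonempty sets. A mask $M(x)$ can be empty only if $x$ is covered with no outside leaf, which is impossible because $T$ has leaves outside every branch. The inductive claim is that the retained set contains a state $S$, up to a permutation of coordinates, such that every true mask contains some $s\in S$. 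Equivalently, the minimal-mask antichain of $\mathcal M_B$ dominates some retained state in the sense defined above.

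The base case is a single leaf edge: its path is just $e$, which is consistently directed in every coordinate, so its state is $\{11111\}$. For the inductive step, take an internal vertex with parent edge $e$ and child edges $e_1,e_2$, whose branches have true mask families $\mathcal A,\mathcal B$. Set $D=\{i:\sigma_{e_1}(i)\ne\sigma_{e_2}(i)\}$ and $P=\{i:\sigma_{e_1}(i)=\sigma_e(i)\}$. Then we verify two facts directly from the orientations.

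\begin{enumerate}
\item[(a)] A leaf $x$ below $e_1$ and a leaf $y$ below $e_2$ are joined by a directed path in $O_i$ exactly when $i\in M_1(x)\cap M_2(y)$ and $e_1,e_2$ point opposite ways relative to the vertex. That second condition is $i\in D$, so the covering hypothesis gives \eqref{eq:cross} for the true masks.
\item[(b)] The mask of $x$ in the enlarged branch is $M_1(x)\cap P$, and the mask of $y$ is $M_2(y)\cap(P\mathbin{\triangle}D)$, since $e_2$ agrees with $e$ exactly on $P\mathbin{\triangle}D$. This is \eqref{eq:prop}.
\end{enumerate}

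Now replace $\mathcal A,\mathcal B$ by the retained dominated states $A,B$ supplied by induction, and choose relative permutations so that every true mask contains a retained one. Two monotonicity facts then carry the claim through. First, a nonempty triple intersection of smaller sets stays nonempty for larger ones, so \eqref{eq:cross} holds for $A,B$; the recurrence therefore considers this $(\text{perm},D,P)$, and no emptiness rejection fires. Second, the propagated masks are dominated by the true propagated masks, because intersection preserves containment. Reducing to minimal masks and passing to canonical permutation forms preserves domination. The domination pruning is safe by the argument given just before the lemma: if $R$ is dominated by a retained $S$, then the true state dominates $R$, hence by transitivity of containment it dominates $S$.

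At the root, apply the same reasoning to the three branches with child edge signs $\sigma_1,\sigma_2,\sigma_3$. Set $D_B=\{i:\sigma_1(i)\ne\sigma_2(i)\}$ and $D_C=\{i:\sigma_1(i)\ne\sigma_3(i)\}$; then $\sigma_2$ and $\sigma_3$ differ exactly on $D_B\mathbin{\triangle}D_C$. Applying (a) to each pair of branches gives the three displayed conditions for the true masks, and they transfer to the retained dominated states $A,B,C$ by monotonicity.

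The main obstacle is making the permutation bookkeeping rigorous. States are stored only up to a global coordinate permutation, yet $D$, $P$ and the three root branches share one fixed coordinate system. I would handle this by proving the stronger, labelled statement: for the actual labelling there is a permutation $\pi$ and a retained $S$ with $\pi(S)$ dominated by the true state. Because the recurrence ranges over all relative permutations of the child states together with all $D,P$, any pair $(\pi_A,\pi_B)$ can be normalised by applying $\pi_A^{-1}$ globally. The root condition is symmetric under simultaneous permutation in the same way. It must also be checked that the parameters the recurrence enumerates include every admissible $(D,P)$; here $P$ is arbitrary because $\sigma_e$ is free.
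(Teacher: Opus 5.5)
Your proof has a genuine gap: the inductive invariant points the wrong way. You ask for a retained $S$ such that every \emph{true} mask contains some $s\in S$. In the paper's terminology, that says the realized state dominates the retained one. So your retained masks are the smaller, stricter ones.

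The paper's invariant is the reverse: some retained state dominates $R_F(B)$. That is, every retained mask contains a realized mask, so the retained state is a relaxation of the truth. With your direction, three steps fail as written.
\begin{itemize}
\item \emph{Transfer of \eqref{eq:cross}.} Coverage gives $a'\cap b'\cap D\neq\varnothing$ for the true masks $a',b'$. Retained masks $a\subseteq a'$, $b\subseteq b'$ can still give an empty intersection, for instance $a'=b'=D=\{1,2\}$, $a=\{1\}$, $b=\{2\}$. Your invariant even allows $S$ to contain extra small masks unrelated to any true mask. Your own phrase ``a nonempty intersection of smaller sets stays nonempty for larger ones'' needs the retained sets to be the larger ones.
\item \emph{Emptiness rejection.} For the same reason $a\cap P$ may be empty although $a'\cap P$ is not, so the rejection can fire.
\item \emph{Pruning.} From ``$S$ dominates $R$'' and ``the true state dominates $R$'' nothing follows about the true state versus $S$; this is not a transitivity. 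The pruning rule keeps the \emph{more} permissive state $S$. If the true state is $R=\{\{1\}\}$ and $S=\{\{1,2\}\}$ is kept instead, your invariant is lost.
\end{itemize}

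The fix is to reverse the invariant: every retained mask contains some realized mask. Then the following all go through.
\begin{itemize}
\item The cross condition, and the three root conditions, transfer by monotonicity of intersection.
\item Each propagated mask satisfies $a\cap P\supseteq a'\cap P$ and $b\cap(P\mathbin{\triangle}D)\supseteq b'\cap(P\mathbin{\triangle}D)$. So it is a nonempty superset of a realized parent mask: no rejection fires, and the parent invariant holds.
\item Pruning is safe by genuine transitivity: if $S$ dominates $R$ and $R$ dominates the truth, then $S$ dominates the truth.
\end{itemize}
With that correction your argument is the paper's. Your facts (a) and (b), and the identification of $D_B\mathbin{\triangle}D_C$ as the sign-difference set between the second and third root edges, are correct. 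Your labelled treatment of the permutation bookkeeping is sound and more careful than the paper's sketch.
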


\begin{proof}
Fix five orientations $F$, and let $R_F(B)$ be the antichain of minimal masks
realized on a branch $B$. Inductively, whenever these masks are nonempty,
some retained state dominates $R_F(B)$. This is immediate for a leaf. For an
internal branch, choose dominating child states and use the actual edge signs
of $F$ for $D$ and $P$. Cross-child coverage gives \eqref{eq:cross};
\eqref{eq:prop} produces nonempty supersets of the realized parent masks.
Minimality and dominance pruning preserve the invariant. At the root, the
actual sign differences give $D_B,D_C$, and enlargement preserves all three
displayed conditions. Thus every five-orientation cover appears in the root
compatibility check.
\end{proof}

The recurrence yields $1$, $6$ and $9$ nondominated states for the three
root branches. Exhausting their relative coordinate permutations and the
sign-difference sets finds no combination satisfying
Lemma~\ref{lem:recurrence}. Hence
\[
 \oc(T)\;\ge\;6 .
\]
This is an exact finite calculation. The ancillary material reconstructs it
from the adjacency list and provides an optional rerun with dominance pruning
disabled.
Together with Table~\ref{tab:orientations}, this gives $\oc(T)=6$.
Proposition~\ref{prop:orientation} now proves
Theorem~\ref{thm:counterexample} for every positive weighting.

\section{The transition at 32 leaves}\label{sec:census}

With $m=\lceil\log_2t\rceil$, the branch-state recurrence decides whether a
given $t$-leaf tree attains the leaf lower bound.  We apply it to all smaller
topologies.

\begin{proof}[Proof of Theorem~\ref{thm:transition}]
Section~\ref{sec:lower} exhibits a $32$-leaf tree of dimension six, so
$D(32)\ge6$. For $2\le t\le31$, the leaf bound
\eqref{eq:leaf-lower} supplies the lower bound. For the upper bound it is
enough to consider trees whose vertices have degree one or three. Indeed, by
Lemma~32 and the proof of Theorem~31 in
\cite[pp.~34--35]{FitzpatrickNowakowski2000}, every tree $T$ has an
associated tree $T'$ with the same number of leaves, all vertices of degree
one or three, and $\dim_\infty(T)\le\dim_\infty(T')$. Consequently, an upper
bound established for every cubic $t$-leaf topology holds for every $t$-leaf
tree.

For $4\le t\le29$ the enumeration generates rooted unordered binary branches
bottom-up. A leaf receives the initial code, and an internal branch is coded
by the ordered pair of its two child codes. After two branches are joined,
the code pair induced by every edge is computed. The tree is retained only
at its least edge-rooting; since every pair of rooted branches is examined,
this processes each unlabelled topology once. With
$m=\lceil\log_2t\rceil$, all
$26{,}049{,}854$ topologies in this range admit an $m$-orientation cover.

For $t=31$, every cubic tree has a unique leaf centroid: the vertex whose
deletion leaves at most $15$ leaves in each component. Existence follows by
moving into any component with more than $15$ leaves. The usual centroid path
argument shows that the centroid set is one vertex or one edge; the latter
would split the odd number of leaves equally. The unordered triple of
branches at this vertex therefore represents each topology once.
All $75{,}021{,}750$ such triples admit five orientations.

This proves equality through $29$ and at $31$. Since $D(t)\le D(t+1)$
\cite[p.~282]{BrighamChartrandDuttonZhang2005}, $D(30)\le D(31)=5$; the leaf
lower bound gives equality. The cases $t\le3$ are immediate.

For $t=32$ it remains to bound $D(32)$ above. Let $T$ have $32$ leaves; by
the reduction above we may assume every vertex has degree one or three. Fix
a leaf $x$ of $T$. Its neighbour has degree three, so $T-x$ is a tree with
$31$ leaves, and $\oc(T-x)\le D(31)=5$. Extend a five-orientation cover of
$T-x$ over the edge at $x$ arbitrarily: the path between two leaves of $T$
other than $x$ lies in $T-x$, so it is still directed in one of the five. A
sixth orientation, directing every edge of $T$ away from $x$, directs the
path from $x$ to each remaining leaf. Hence $\oc(T)\le6$, so $D(32)\le6$ and
$D(32)=6$.
\end{proof}

\section{Consequences}\label{sec:consequences}

Theorem~\ref{thm:transition} determines $D(t)$ through $32$: the leaf bound
is exact through $31$ and first becomes strict at $32$, where $D(32)=6>5$.
By Proposition~\ref{prop:orientation}, the same topology has dimension six
under every positive edge weighting. Thus \eqref{eq:conjectured} fails, and
the weighted-tree conjecture is false. The unrestricted bound
$D(t)\le\log_2t$ already fails at $t=3$, so \eqref{eq:conjectured} is the
substantive form of the constant-one conjecture.

Set $\Delta(t)=D(t)-\lceil\log_2t\rceil$. Then $\Delta(t)=0$ through $31$ and
$\Delta(32)=1$. This leads to the following extremal question.

\begin{problem}\label{prob:plusone}
Is $D(t)\le\lceil\log_2t\rceil+1$ for every $t\ge2$?
\end{problem}

More generally, it is unknown whether $\Delta(t)$ is bounded, or whether
$D(t)=(1+o(1))\log_2t$.

\section*{Data availability}

The archived ancillary material contains the tree, orientation masks, induced
$\Z^6$ embedding, retained states, raw census ranks and verification
procedures: \url{https://doi.org/10.5281/zenodo.21965426}.

\section*{Funding}

This research did not receive any specific grant from funding agencies in the
public, commercial, or not-for-profit sectors.

\section*{Declaration of competing interest}

The author declares that he has no known competing financial interests or
personal relationships that could have appeared to influence the work reported
in this paper.

\section*{Declaration of generative AI and AI-assisted technologies in the
manuscript preparation process}

OpenAI's GPT-5.6 Sol assisted in implementing the computational search
strategy and with drafting this manuscript. The author takes full
responsibility for the content of the article.

\end{document}